\newtheorem{theorem}{Theorem}
\newtheorem{lemma}[theorem]{Lemma}
\theoremstyle{definition}
\newtheorem{ex}{Example}
\def\G{\Gamma}
\def\vep{\varepsilon}
\def\lt{\left}
\def\rt{\right}
\renewcommand{\Re}{\operatorname{\rm Re}}
\renewcommand{\Im}{\operatorname{\rm Im}}
\DeclareMathOperator{\lcm}{lcm}
\newcommand{\set}[1]{\left\{#1\right\}}
\def\imod#1{\allowbreak\mkern10mu({\operator@font mod}\,\,#1)}
\begin{document}

\title{Coincidence isometries of a shifted square lattice}
 
\author{Manuel Joseph C. Loquias$^{1,2}$ and Peter Zeiner$^1$\\ \small$^1$ Fakult\"at f\"ur Mathematik, Universit\"at Bielefeld, Postfach 100131, 33501 Bielefeld, Germany 
\\ \small$^2$ Institute of Mathematics, University of the Philippines,\\ \small Diliman, C.P. Garcia St., 1101 Diliman, Quezon City, Philippines}
 
 

\maketitle

\begin{abstract}
We consider the coincidence problem for the square lattice that is translated by an arbitrary vector.  General results are obtained about the set of coincidence isometries and the coincidence site lattices of a shifted square lattice by identifying the square lattice with the ring of Gaussian integers.  To illustrate them, we calculate the set of coincidence isometries, as well as generating functions for the number of coincidence site lattices and coincidence isometries, for specific examples.
\end{abstract}

\section{Introduction}
	The sublattice of finite index formed by the points of intersection of a lattice and a rotated copy of the same lattice is called a coincidence site lattice or CSL.  It was Friedel 
	in 1911 who first recognized the use of CSLs in describing and classifying grain boundaries in crystals \cite{Fr}.  Since then, CSLs have proven to be an indispensable tool in the 
	study of grain boundaries and interfaces \cite{KW, R,Bo,P}. This prompted various authors to examine the CSLs of several lattices including cubic and hexagonal ones
	\cite{GBW,G,G2}.

	The discovery of quasicrystals triggered a renewed interest in CSLs.  This led to the analysis of CSLs from a more mathematical point of view.  Known results 
	for lattices were again	considered and reformulated so that they may be readily extended to aperiodic situations.  This was necessary since the first stage in 
	solving the coincidence problem for quasicrystals involved calculating the coincidence site modules (CSMs) of the underlying translation modules, such as 
	modules with 5, 8, 10, and 12-fold symmetry (see \cite{PBR,B} and references therein, see also \cite{W,WL,OWL,WL2}).  Hence, coincidences of lattices and
	modules in dimensions $d\leq 4$ were investigated in \cite{B,Z,Z2,BZ,BGHZ}.  Recent results include the decomposition of coincidence isometries of lattices and
	modules in Euclidean $n$-space as a product of at most $n$ coincidence reflections \cite{Zo,H} and the relationship between the sets of coincidence and
	similarity isometries of lattices and modules \cite{Gl,Gl2}.

	The mathematical treatment of the coincidence problem is very often restricted to linear coincidence isometries, that is, rotations and improper rotations, 
	whereas isometries containing a translational part are ignored -- as we did in the first two paragraphs above.  Nevertheless, general (affine) isometries are 
	important in crystallography.  Indeed, the situation where one shifts the two component crystals against each other has been investigated in \cite{GC,F} and 
	references therein.  It was shown that these shifts are needed to minimize the grain boundary energy, thus they are often referred to as ``rigid 
	relaxations''.  However, some authors claim that minimizing the energy may require shifts that destroy all coincidence sites.

	Even though the idea of introducing a shift after applying a linear coincidence isometry has already been dealt with in the physical literature, not much can be 
	found in the mathematical literature where a systematic treatment of the subject is still missing. Thus, we now aim to generalize the notion of a CSL and CSM, respectively, that is,  
	we investigate the possible intersections of two lattices (modules) that are related by an isometry. To simplify the discussion, we restrict our attention here to the lattice case 
	though most of the results also work in the module case.  In fact, some steps in the general direction have been made in \cite{PBR}.  There, the authors have considered 
	coincidence rotations around certain points which are not lattice (module) points. For instance, they determined the set of coincidence rotations about the center 
	of a Delauney cell of the square lattice and calculated the corresponding indices.

	In this paper we discuss a related and special case: the coincidence problem for shifted lattices.  That is, after translating the lattice by some vector and 
	upon rotation of this shifted lattice (with respect to the origin), we consider its intersection with the shifted lattice. This should be useful in the context of bicrystallography 
	\cite{GP, PV}.  Similar to the approach in \cite{PBR, B}, we start our investigation with the square lattice and provide solutions that, when modified appropriately, also apply to 
	planar modules. 

	The purpose of this paper is to shed further light on the geometry of CSLs.  It is beyond the scope of this paper to discuss the actual grain boundary energy, which would require 
	considering the actual Hamiltonians.  In particular, the paper is not intended to determine which translations are the most favourable ones in terms of energy.

\section{The coincidence problem for lattices}
	Let $\G\subseteq \mathbb{R}^d$ be a $d$-dimensional lattice and $R\in O(d)$.  We say that $R$ is a \emph{(linear) coincidence isometry} of $\G$ if $\G(R):=\G\cap R\G$ is a sublattice 
	of finite index in $\G$ and we call $\G(R)$ a \emph{coincidence site lattice} (CSL) of $\G$.  The \emph{coincidence index} of a coincidence isometry $R$, denoted by $\Sigma(R)$, is 
	given by $\Sigma(R)=[\G:\G(R)]=\frac{\operatorname{vol}(\G(R))}{\operatorname{vol}(\G)}$.  Geometrically, $\Sigma(R)$ is equal to the ratio of the volume of a fundamental domain of 
	$\G$ with the volume of a fundamental domain of $\G(R)$.

	We denote the set of (linear) coincidence isometries of $\G$ by $OC(\G)$ and the set of coincidence rotations of $\G$, that is, $OC(\G)\cap SO(d)$, by $SOC(\G)$. The set $OC(\G)$ 
	forms a group having $SOC(\G)$ as a subgroup \cite{B}.

	We summarize here the known results for the square lattice $\mathbb{Z}^2$ (see \cite{PBR} for details).  The group of coincidence rotations of $\mathbb{Z}^2$ is $SOC(\mathbb{Z}^2)=
	SO(2,\mathbb{Q})$, that is, the coincidence rotations of $\mathbb{Z}^2$ are the special orthogonal matrices having rational entries.  In determining the structure of this group, the
	square lattice is identified with the ring of Gaussian integers $\Gamma=\mathbb{Z}[i]$, where $i=\sqrt{-1}$, embedded in the set of complex numbers $\mathbb{R}[i]=\mathbb{C}$.  In 
	this setting, a coincidence rotation $R$ by an angle of $\theta$ corresponds to multipication by the complex number $e^{i\theta}$, where 
	\begin{equation}\label{coincrot}
		e^{i\theta}=\vep\cdot \prod_{p\equiv 1(4)}{\lt(\frac{\omega_p}{\overline{\omega_p}}\rt)}^{n_p}
	\end{equation}
	with $n_p\in\mathbb{Z}$ and only a finite number of $n_p\neq 0$, $\vep$ is a unit in $\mathbb{Z}[i]$, $p$ runs over the rational primes $p\equiv 1\imod{4}$, 
	and $\omega_p$, and its complex conjugate $\overline{\omega_p}$ are the Gaussian prime factors of $p=\omega_p\cdot\overline{\omega_p}$.  If we denote by $z$ the numerator of 
	$e^{i\theta}$, that is,
	\begin{equation}\label{num}
		z=\prod_{\stackrel{p\equiv 1(4)}{n_p>0}}{\omega_p}^{n_p}\cdot\prod_{\stackrel{p\equiv 1(4)}{n_p<0}}{{\lt(\overline{{\omega_p}}\rt)}^{\,-n_p}},
	\end{equation}
	then the coincidence index of $R$ is the number theoretic norm of $z$, that is, $\Sigma(R)=N(z)=z\cdot\overline{z}$, and the CSL obtained from $R$, $\Gamma(R)$, is the principal ideal
	$(z):=z\mathbb{Z}[i]$.  Consequently, the group of coincidence rotations of the square lattice is given by $SOC(\mathbb{Z}^2)=SOC(\G)\cong C_4\times\mathbb{Z}^{(\aleph_0)}$, where 
	$C_4$ is the cyclic group of order 4 with generator $i$, and $\mathbb{Z}^{(\aleph_0)}$ is the direct sum of countably many infinite cyclic 
	groups each of which is generated by $\omega_p/\overline{\omega_p}$ with $p\equiv 1\imod{4}$.

	Every coincidence isometry $T\in OC(\G)\setminus SOC(\G)$ can be written as $T=R\cdot T_r$, where $R\in SOC(\G)$ and $T_r$ is the reflection along the real axis (complex conjugation).
	Here, $\Sigma(T)=\Sigma(R)$ and $\Gamma(T)=\Gamma(R)$.  Finally, $OC(\mathbb{Z}^2)=OC(\G)\cong SOC(\G)\rtimes C_2$ (semi-direct product), where $C_2$ is the cyclic group of order 2 
	generated by $T_r$.

	The possible coincidence indices and the number of CSLs for a given index $m$ may be described by means of a generating function.  Let $\hat{f}(m)$ be the number of coincidence 
	rotations of $\G$ and $f(m)$ be the number of CSLs of $\G$, for a given index $m$.  Then $\hat{f}(m)=4f(m)$, where the factor 4 stems from the fact that there are four symmetry 
	rotations.  The function $f(m)$ is multiplicative (that is, $f(1)=1$ and $f(mn)=f(m)f(n)$ when $m$, $n$ are relatively prime), and $f(p^r)=2$ for primes $p\equiv 1\imod{4}$ whereas 
	$f(p^r)=0$ for primes $p\equiv 2,3\imod{4}$, where $r\in\mathbb{N}$.  We write the generating function for $f(m)$ as a Dirichlet series $\Phi(s)$ given by
	\begin{alignat*}{2}
		\Phi(s)&=\sum_{m=1}^{\infty}{\frac{f(m)}{m^s}}=\prod_{p\equiv 1(4)}{\frac{1+p^{-s}}{1-p^{-s}}}\\
		&=1+\tfrac{2}{5^s}+\tfrac{2}{13^s}+\tfrac{2}{17^s}+\tfrac{2}{25^s}+\tfrac{2}{29^s}+\tfrac{2}{37^s}+
		\tfrac{2}{41^s}+\tfrac{2}{53^s}+\tfrac{2}{61^s}+\tfrac{4}{65^s}+\tfrac{2}{73^s}+\ldots.
	\end{alignat*}	

\section{Coincidences of shifted lattices}
	We now turn our attention to lattices $\G$ in $\mathbb{R}^d$ that are shifted by some vector $x\in\mathbb{R}^d$ and we look at intersections of the form $(x+\G)\cap R(x+\G)$, where 
	$R\in O(d)$.  We remark that we actually only need to consider values of $x$ in a fundamental domain of $\G$.

	An $R\in O(d)$ is said to be a \emph{(linear) coincidence isometry of the shifted lattice} $x+\G$ if $(x+\G)\cap R(x+\G)$ is a sublattice of $x+\G$ of finite index and we also call 
	$(x+\G)\cap R(x+\G)$ a CSL of the shifted lattice $x+\G$.  We denote the set of all coincidence isometries of $x+\G$ by $OC(x+\G)$.  The following theorem characterizes the set 
	$OC(x+\G)$ and relates the CSLs of $x+\G$ with the CSLs of $\G$ \cite{LZ}.
	
	\begin{theorem}\label{cor3}
		Let $\G$ be a lattice in $\mathbb{R}^d$ and $x\in\mathbb{R}^d$.
		\begin{enumerate}
			
			\item $OC(x+\G)=\set{R\in OC(\G):Rx-x\in\G +R\G}$
			
			\item If $R\in OC(x+\G)$ with $Rx-x=t+Rs$ for some $t,s\in\G$, then \[(x+\G)\cap R(x+\G)=x+(t+\G(R)).\]
		\end{enumerate}
	\end{theorem}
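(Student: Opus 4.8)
The plan is to prove both parts by directly analyzing when the intersection $(x+\G)\cap R(x+\G)$ is nonempty and, when it is, identifying it explicitly. The key observation is that a point lies in this intersection if and only if it can be written both as $x+u$ and as $R(x+v)$ for some $u,v\in\G$. Equating these gives $x+u = Rx+Rv$, i.e.\ $u - Rv = Rx - x$. Thus the intersection is nonempty precisely when $Rx-x$ lies in the set $\{u-Rv : u,v\in\G\} = \G + R\G$ (using that $R\G$ is a group, so $-Rv$ ranges over $R\G$). This immediately suggests the condition appearing in part (1), and it ties the shifted problem back to the ordinary CSL $\G(R)=\G\cap R\G$.

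First I would handle part (2), since the explicit description there drives the characterization in part (1). Assuming $R\in OC(\G)$ and $Rx-x = t+Rs$ with $t,s\in\G$, I want to show $(x+\G)\cap R(x+\G) = x+(t+\G(R))$. For the inclusion $\supseteq$, I take a point $x+t+w$ with $w\in\G(R)=\G\cap R\G$ and verify it lies in both $x+\G$ (clear, since $t+w\in\G$) and in $R(x+\G)$; for the latter I use $x+t+w = R x + Rs + w$ and the fact that $w\in R\G$ lets me absorb it, so that $x+t+w = R(x+s+R^{-1}w)$ with $s+R^{-1}w\in\G$. For the reverse inclusion $\subseteq$, I start from an arbitrary common point $x+u = R(x+v)$, subtract the relation $x+t+Rs = Rx$ coming from $Rx-x=t+Rs$ rewritten appropriately, and deduce that $u-t\in\G\cap R\G=\G(R)$, giving the point the required form $x+t+(u-t)$. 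The main bookkeeping is keeping track of which combinations land in $\G$ versus $R\G$, but everything reduces to the defining property of $\G(R)$.

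Next I would establish part (1). The containment $OC(x+\G)\subseteq OC(\G)$ follows because if the shifted intersection has finite index, then in particular it is nonempty and a translate of a subgroup of $\G$, forcing $R\in OC(\G)$; more carefully, a coincidence isometry of $x+\G$ must first be a coincidence isometry of $\G$ (otherwise $\G\cap R\G$ is not of finite index, and neither can any translate of it be). Conversely, given $R\in OC(\G)$ with $Rx-x\in\G+R\G$, I write $Rx-x=t+Rs$ and invoke part (2) to conclude that the intersection equals $x+(t+\G(R))$, which is a coset of the finite-index sublattice $\G(R)$ inside $x+\G$, hence of finite index. Combining the two directions yields the stated equality of sets.

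The hard part will be the nonemptiness subtlety: one must verify that the condition $Rx-x\in\G+R\G$ is both necessary and sufficient, and in particular that when it holds the resulting set is genuinely a finite-index CSL of the shifted lattice rather than merely a nonempty coset. I expect the decomposition $Rx-x=t+Rs$ to be non-unique (it is determined only modulo $\G(R)$), so I would remark that the coset $t+\G(R)$ is independent of the chosen decomposition, ensuring part (2) is well defined. Once that is clear, the index statement and the group-theoretic structure follow routinely from the properties of $\G(R)$ recalled earlier in the excerpt.
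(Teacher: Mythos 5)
Your proposal is essentially correct, but there is nothing in the paper to compare it against: Theorem~\ref{cor3} is stated here with a citation to \cite{LZ} (a paper listed as in preparation), and no proof is given in this paper at all. So your argument supplies what the paper omits, and it does so by the natural route: the observation that $(x+\G)\cap R(x+\G)\neq\emptyset$ precisely when $Rx-x\in\G+R\G$ (from equating $x+u=R(x+v)$), proving the coset identity of part (2) first, deriving both directions of part (1) from it, and remarking that the coset $t+\G(R)$ does not depend on the chosen decomposition (indeed, if $t+Rs=t'+Rs'$ then $t-t'=R(s'-s)\in\G\cap R\G$). That last remark is genuinely needed for part (2) to be well posed, and you correctly identified it.

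Two small repairs. First, a sign slip in your proof of $\supseteq$ in part (2): from $Rx-x=t+Rs$ one gets $x+t=Rx-Rs$, hence $x+t+w=R\lt(x-s+R^{-1}w\rt)$, not $Rx+Rs+w=R\lt(x+s+R^{-1}w\rt)$; the conclusion is unaffected since $-s+R^{-1}w\in\G$ just as well. Second, in the direction $OC(x+\G)\subseteq\set{R\in OC(\G):Rx-x\in\G+R\G}$, your parenthetical justification (``neither can any translate of it be of finite index'') should be stated as an explicit appeal to part (2): nonemptiness of the intersection gives $Rx-x\in\G+R\G$, the coset identity then shows the intersection is a translate of $\G\cap R\G$, and finite index of that translate in $x+\G$ forces $[\G:\G\cap R\G]<\infty$, i.e.\ $R\in OC(\G)$. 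Note also that your proof of the coset identity never uses the hypothesis $R\in OC(x+\G)$, only the existence of the decomposition $Rx-x=t+Rs$; this is slightly more general than the statement as given and is exactly why part (1) follows cleanly from part (2).
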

	
	Theorem \ref{cor3} tells us that a CSL of $x+\G$ obtained from $R\in OC(x+\G)$ is just a translate of the CSL $\G(R)$ in $\G$.  Consequently, $[x+\G: (x+\G)\cap R(x+\G)]=\Sigma(R)$
	which means that shifting the lattice does not give rise to new values of coincidence indices. In addition, we see that $OC(x+\G)$ is a subset of $OC(\G)$.  The set 
	$OC(x+\G)$ is non-empty because the identity $1\in OC(x+\G)$.  Also, $OC(x+\G)$ is closed under inverses, that is, $R^{-1}\in OC(x+\G)$ whenever $R\in OC(x+\G)$.  However, given 
	$R_1$, $R_2\in OC(x+\G)$, $R_2R_1$ is not necessarily in $OC(x+\G)$.  In fact, $OC(x+\G)$ is not a group in general \cite{LZ}.

\section{The coincidence problem for the shifted square lattice}
	From this point onwards, we take $\G=\mathbb{Z}[i]$,  the square lattice viewed as the ring of Gaussian integers, and $x\in \mathbb{C}$.  From \eqref{coincrot} and \eqref{num}, we 
	see that we can associate each $R\in SOC(\G)$ to $(z,\vep)$, and we will write this as $R(z,\vep)$.  That is, $R(z,\vep)\in SOC(\G)$ stands for multiplication by the complex number
	$\vep\frac{z}{\overline{z}}$.  We may assume that $\frac{z}{\overline{z}}$ is reduced, that is, $z$ and $\overline{z}$ have no factors in common.  In addition, we shall simply set 
	$z=1$ whenever $R(z,\vep)\in P(\G)$, where $P(\G)$ denotes the point group of $\G$.

	Let $SOC(x+\G):=OC(x+\G)\cap SO(d)$.  We start with the following lemma.
	\begin{lemma}\label{lem3}
		Let $\G=\mathbb{Z}[i]$, $x\in\mathbb{C}$, $R=R(z,\vep)\in SOC(\G)$, and $T=RT_r$.
	 	\begin{enumerate}
	 	 	\item $R\in SOC(x+\G)$ if and only if $(\vep z-\overline{z})x\in\mathbb{Z}[i]$
			
			\item $T\in OC(x+\G)$ if and only if $\vep z\overline{x}-\overline{z}x\in\mathbb{Z}[i]$
		\end{enumerate}
	\end{lemma}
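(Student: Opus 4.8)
The plan is to derive both equivalences directly from Theorem~\ref{cor3}(1), which says that for $S\in OC(\G)$ one has $S\in OC(x+\G)$ precisely when $Sx-x\in\G+S\G$. Both $R=R(z,\vep)$ and $T=RT_r$ already lie in $OC(\G)$ (the former in $SOC(\G)$, the latter since every element of $OC(\G)\setminus SOC(\G)$ is of the form $R\cdot T_r$), so the criterion applies to each. In the Gaussian-integer picture $R$ acts as multiplication by $\vep\,z/\overline{z}$, hence $Rx=\vep\frac{z}{\overline{z}}x$, while $T=RT_r$ acts by $x\mapsto\vep\frac{z}{\overline{z}}\,\overline{x}$, so $Tx=\vep\frac{z}{\overline{z}}\overline{x}$. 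Thus everything reduces to translating the membership $Sx-x\in\G+S\G$ into a divisibility statement in $\mathbb{Z}[i]$.

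The key step is to compute the module $\G+R\G$ explicitly. Since $\vep$ is a unit, $R\G=\vep\frac{z}{\overline{z}}\mathbb{Z}[i]=\frac{z}{\overline{z}}\mathbb{Z}[i]$, and factoring out $1/\overline{z}$ gives $\G+R\G=\frac{1}{\overline{z}}\bigl(z\mathbb{Z}[i]+\overline{z}\mathbb{Z}[i]\bigr)=\frac{1}{\overline{z}}(z,\overline{z})$. Because $z/\overline{z}$ is taken in reduced form, $z$ and $\overline{z}$ are coprime, and since $\mathbb{Z}[i]$ is a principal ideal domain the ideal $(z,\overline{z})$ equals all of $\mathbb{Z}[i]$; hence $\G+R\G=\frac{1}{\overline{z}}\mathbb{Z}[i]$. (The point-group case $z=1$ is covered as well, giving $\G+R\G=\mathbb{Z}[i]$.) For part~(2) I would observe that complex conjugation maps $\mathbb{Z}[i]$ onto itself, so $T\G=RT_r\G=R\G$, and therefore $\G+T\G=\G+R\G=\frac{1}{\overline{z}}\mathbb{Z}[i]$ as well.

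With these modules in hand, both equivalences follow by clearing the denominator $\overline{z}$. For~(1), $Rx-x=\bigl(\vep\frac{z}{\overline{z}}-1\bigr)x$ lies in $\frac{1}{\overline{z}}\mathbb{Z}[i]$ if and only if $\overline{z}\bigl(\vep\frac{z}{\overline{z}}-1\bigr)x=(\vep z-\overline{z})x\in\mathbb{Z}[i]$, which is the stated condition. For~(2), $Tx-x=\vep\frac{z}{\overline{z}}\overline{x}-x$ lies in $\frac{1}{\overline{z}}\mathbb{Z}[i]$ if and only if multiplying by $\overline{z}$ yields $\vep z\overline{x}-\overline{z}x\in\mathbb{Z}[i]$, again as claimed. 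The only genuinely substantive point is the identification $\G+R\G=\frac{1}{\overline{z}}\mathbb{Z}[i]$, i.e.\ that the reducedness of $z/\overline{z}$ makes $z$ and $\overline{z}$ generate the unit ideal; once that is established the rest is bookkeeping, and one must only take minor care that multiplication by the nonzero factor $\overline{z}$ is invertible so that the equivalences go in both directions.
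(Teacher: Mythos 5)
Your proposal is correct and follows essentially the same route as the paper: both compute $\G+R\G=\frac{1}{\overline{z}}\mathbb{Z}[i]$ from the coprimality of $z$ and $\overline{z}$ in the principal ideal domain $\mathbb{Z}[i]$, note that the same identity holds with $T=RT_r$ in place of $R$, and then apply Theorem~\ref{cor3} and clear the denominator $\overline{z}$. The only difference is cosmetic: you spell out why $\G+T\G=\G+R\G$ (conjugation preserves $\mathbb{Z}[i]$), where the paper simply says ``similarly.''
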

	\begin{proof}
		Recall that $\G$ is a principal ideal domain.  Since $\vep$ is a unit in $\G$ and $z$, $\overline{z}$ are relatively prime,
		\[\G+R\G=\G+\vep\frac{z}{\overline{z}}\G=\frac{1}{\overline{z}}(\overline{z}\G+z\G)=\frac{1}{\overline{z}}\gcd(z,\overline{z})\G=\frac{1}{\overline{z}}\G.\]
		By Theorem \ref{cor3}, $R\in SOC(x+\G)\Leftrightarrow Rx-x\in\G+R\G\Leftrightarrow \vep\frac{z}{\overline{z}}x-x\in\frac{1}{\overline{z}}\G\Leftrightarrow (\vep 
		z-\overline{z})x\in\mathbb{Z}[i].$  Similarly, $\G+T\G=\frac{1}{\overline{z}}\G$. Applying again Theorem \ref{cor3}, we obtain the second statement.
	\end{proof}

	We now obtain the following results about $SOC(x+\G)$ and $OC(x+\G)$.

	\begin{theorem}\label{SOCgroup}
	 	If $\Gamma=\mathbb{Z}[i]$ and $x\in\mathbb{C}$ then $SOC(x+\G)$ is a subgroup of $SOC(\G)$.
	\end{theorem}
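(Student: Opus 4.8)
The plan is to notice that $SOC(x+\G)$ is automatically non-empty (it contains the identity) and closed under inverses, both of which were already recorded for $OC(x+\G)$ after Theorem~\ref{cor3}; since $SO(2)$ is a group and $SOC(x+\G)=OC(x+\G)\cap SO(2)$, the only thing left to prove is closure under composition. This is really the whole content of the statement: $OC(x+\G)$ fails to be a group precisely because it is not closed under products in general, yet the rotation part $SOC(x+\G)$ will turn out to be closed. The tool I would use throughout is the criterion of Lemma~\ref{lem3}(1): writing $R=R(z,\vep)$ in reduced form, $R\in SOC(x+\G)$ if and only if $(\vep z-\overline{z})x\in\mathbb{Z}[i]$.

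The key object I would introduce is the set $I_x:=\set{\gamma\in\G:\gamma x\in\G}$. A one-line check shows $I_x$ is an ideal of $\G=\mathbb{Z}[i]$: it is an additive subgroup, and if $\gamma x\in\G$ and $\delta\in\G$ then $(\delta\gamma)x=\delta(\gamma x)\in\G$. Since $\G$ is a principal ideal domain, $I_x=(c)$ for some $c\in\G$. In this language Lemma~\ref{lem3}(1) reads $R(z,\vep)\in SOC(x+\G)\Leftrightarrow \vep z-\overline{z}\in I_x$, so the problem becomes purely arithmetic in $\G$.

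Now I would take $R_1=R(z_1,\vep_1)$ and $R_2=R(z_2,\vep_2)$ in $SOC(x+\G)$, so that $\alpha:=\vep_1 z_1-\overline{z_1}\in I_x$ and $\beta:=\vep_2 z_2-\overline{z_2}\in I_x$. The composition $R_2R_1$ is multiplication by $\vep_1\vep_2\,\tfrac{z_1z_2}{\overline{z_1z_2}}$; to read off its reduced form I set $g:=\gcd(z_1z_2,\overline{z_1z_2})$ and $z:=z_1z_2/g$. Writing $N:=\vep_1\vep_2 z_1z_2-\overline{z_1z_2}$, closure amounts to showing $N/g\in I_x$ (the criterion is insensitive to units, which absorb the ambiguity in $\vep$ and in $g=\overline{g}$), i.e.\ $c\mid N/g$. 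The argument then has two halves: (i) the identity $N=\vep_2 z_2\,\alpha+\overline{z_1}\,\beta$, which exhibits $N$ as an element of the ideal $I_x$ and hence gives $c\mid N$; and (ii) a coprimality computation. Since $\gcd(z_j,\overline{z_j})=1$, one finds $\gcd(\alpha,z_1)=\gcd(\alpha,\overline{z_1})=1$ and similarly for $\beta$, so the common divisor $c$ of $\alpha$ and $\beta$ is coprime to each of $z_1,\overline{z_1},z_2,\overline{z_2}$, and therefore to $g$ (as $g\mid z_1z_2$). From $c\mid N$, $g\mid N$, and $\gcd(c,g)=1$ I would conclude $cg\mid N$, whence $c\mid N/g$; this is exactly $N/g\in I_x$, and Lemma~\ref{lem3}(1) then yields $R_2R_1\in SOC(x+\G)$.

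The main obstacle is step (ii): one must cancel the common factor $g$ that appears when $z_1z_2/\overline{z_1z_2}$ is brought to reduced form, and do so without leaving the ideal $I_x$. This is where the proof genuinely uses that we compose rotations rather than general isometries, and where the reduced-form hypotheses on $z_1/\overline{z_1}$ and $z_2/\overline{z_2}$ enter: working prime by prime they give $g=\gcd(z_1,\overline{z_2})\cdot\gcd(z_2,\overline{z_1})$, so $g$ involves only primes against which $c$ has already been shown to be coprime. The only remaining point to dispatch is the degenerate case $I_x=(0)$, in which $\alpha=\beta=0$ forces $R_1,R_2\in P(\G)$ and $SOC(x+\G)$ collapses to a subgroup of the point group, making closure immediate.
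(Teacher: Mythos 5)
Your proof is correct, and it shares the paper's skeleton: identity and inverses are quoted from the remarks after Theorem~\ref{cor3}, closure under composition is tested with Lemma~\ref{lem3}(1), and your key identity $N=\vep_2z_2\alpha+\overline{z_1}\beta$ is, up to a factor of $\overline{g}$, exactly the decomposition the paper writes inside its brackets. The genuine difference is how the common factor is cancelled. The paper chooses $g=\gcd(z_1,\overline{z_2})$ and writes $z_1=h_1g$, $\overline{z_2}=\overline{h_2}g$, so that $R_1R_2$ corresponds exactly to $(h_1h_2,\vep_1\vep_2)$ with no unit correction; it then shows $(\vep_1\vep_2h_1h_2-\overline{h_1}\,\overline{h_2})x\in\frac{1}{g}\G$, by the symmetric computation also $\in\frac{1}{\overline{g}}\G$, and concludes via $\frac{1}{g}\G\cap\frac{1}{\overline{g}}\G=\G$, using only that $g$ and $\overline{g}$ are coprime. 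You instead encode the shift into the ideal $I_x=\set{\gamma\in\G:\gamma x\in\G}=(c)$ and cancel by proving $c$ coprime to $z_1,\overline{z_1},z_2,\overline{z_2}$, hence to $g$, so that $cg\mid N$. What your route buys is a reusable object: $I_x$ is the denominator ideal of $x$ (for reduced $x=p/q\in\mathbb{Q}(i)$ one has $I_x=(q)$), so your criterion $\vep z-\overline{z}\in I_x$ is in effect Lemma~\ref{prop8} extended to arbitrary $x$. What it costs is two chores that the paper's choice of $g$ avoids entirely: the unit bookkeeping when reducing $z_1z_2/\overline{z_1z_2}$ (your parenthetical claim is true, but a complete write-up should record the one-line check that $N/g$ and $\vep z-\overline{z}$ are associates, since $\overline{g}$ is only an associate of $g$, not equal to it), and the degenerate case $I_x=(0)$, which the paper never needs to separate because its argument only multiplies the hypotheses $(\vep_jz_j-\overline{z_j})x\in\G$ by Gaussian integers and never takes a gcd involving quantities built from $x$.
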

	\begin{proof}
	 	We have already mentioned that $1\in SOC(x+\G)$ and $SOC(x+\G)$ is closed under inverses. Let $R_j(z_j,\vep_j)\in SOC(x+\G)$ for $j=1,2$ and $g=\gcd(z_1,\overline{z_2})$.  By 
		Lemma \ref{lem3}, $(\vep_jz_j-\overline{z_j})x\in\mathbb{Z}[i]$ for $j=1,2$.  Write $z_1=h_1g$, $\overline{z_2}=\overline{h_2}g$, and hence, $h_1$ and $\overline{h_2}$ are
		relatively prime.  This means that $R_1R_2$ corresponds to $(h_1h_2,\vep_1\vep_2)$ so that $R_1R_2\in SOC(x+\G)$ if 
		$\lt(\vep_1\vep_2h_1h_2-\overline{h_1}\overline{h_2}\,\rt)x\in\mathbb{Z}[i]$ from Lemma \ref{lem3}.  Now,
		\begin{alignat*}{2}
			(\vep_1\vep_2h_1h_2-\overline{h_1}\overline{h_2}\,)x&=\frac{1}{g}(\vep_1\vep_2z_1h_2-\overline{h_1z_2}\,)x\\
			&=\frac{1}{g}[(\vep_1\vep_2z_1h_2-\vep_2h_2\overline{z_1})+(\vep_2\overline{h_1}z_2-\overline{h_1z_2}\,)]x\\
			&=\frac{1}{g}[\vep_2h_2\underbrace{(\vep_1z_1-\overline{z_1})x}_{\in\;\mathbb{Z}[i]}+\overline{h_1}\underbrace{(\vep_2z_2-\overline{z_2})x}_
			{\in\;\mathbb{Z}[i]}]\in \frac{1}{g}\G.
		\end{alignat*}
		Similarly, we also obtain that $(\vep_1\vep_2h_1h_2-\overline{h_1}\overline{h_2}\,)x\in \frac{1}{\overline{g}}\,\G$.  Hence,
		\[(\vep_1\vep_2h_1h_2-\overline{h_1}\overline{h_2}\,)x\in\frac{1}{g}\G\cap\frac{1}{\overline{g}}\G=\frac{1}{g\overline{g}}(g\G\cap\overline{g}\G)
		=\frac{1}{g\overline{g}}\lcm{(g,\overline{g})}\G=\mathbb{Z}[i]\]
		since $g$, $\overline{g}$ are relatively prime.
	\end{proof}

	For $OC(x+\G)$, the situation is more complicated.  One can show the following results (see \cite{LZ}).

	\begin{theorem}\label{prop5}
		Let $\G=\mathbb{Z}[i]$ and $x\in\mathbb{C}$.
		\begin{enumerate} 
			\item The set $OC(x+\G)$ is a subgroup of $OC(\G)$ if and only if for any $T_1$, $T_2\in$ \mbox{$OC(x+\G)\setminus SOC(x+\G)$}, $T_1T_2\in SOC(x+\G)$.

			\item If $OC(x+\G)$ contains a reflection $T\in P(\G)$ then $OC(x+\G)$ is a subgroup of $OC(\G)$. Also, $OC(x+\G)=SOC(x+\G)\rtimes\langle T\rangle$, where $\langle 
			T\rangle=\set{1,T}\cong C_2$ is the group generated by $T$.
	
			\item Suppose $OC(x+\G)$ does not contain a reflection $T\in P(\G)$.  If $RT_r\in OC(x+\G)$ where $R(z,\vep_1)\in SOC(\G)$ then for any unit $\vep_2$, 
			$R_2=R_2(z,\vep_2)\notin 	SOC(x+\G)$.
		\end{enumerate}
	\end{theorem}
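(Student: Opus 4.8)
The plan is to prove the three parts in the order (2), (3), (1), since the first two supply the tools for the hard direction of (1). Throughout I write $H=SOC(x+\G)$, which is a group by Theorem \ref{SOCgroup}, and $A=OC(x+\G)\setminus H$ for the set of coincidence \emph{reflections} of $x+\G$; each element of $A$ is an involution and $A=A^{-1}$, since $OC(x+\G)$ is closed under inverses. Being a group then amounts to closure, and by determinants the only products to control are $HH\subseteq H$ (known), $AA\subseteq H$, and the mixed products $HA,AH\subseteq A$.

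For (2) the essential point is that a reflection $T\in P(\G)$ is a lattice symmetry, so $T\G=\G$ and hence $\G+T\G=\G$. I would work with Theorem \ref{cor3} rather than Lemma \ref{lem3}. From $T\in OC(x+\G)$ we get $Tx-x\in\G+T\G=\G$. For $R\in H$, expanding $RTx-x=R(Tx-x)+(Rx-x)$ and using $Tx-x\in\G$ together with $RT\G=R\G$ gives $RTx-x\in\G+R\G=\G+RT\G$, so $RT\in OC(x+\G)$; the computations for $TR$ and for $T'T$ with $T'\in A$ are identical. This yields $A=HT$ and closure of $H\cup HT$; as $H$ then has index two it is normal, giving $OC(x+\G)=H\rtimes\langle T\rangle$.

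For (3) I argue by contradiction, assuming $R(z,\vep_1)T_r\in OC(x+\G)$ and $R(z,\vep_2)\in SOC(x+\G)$. By Lemma \ref{lem3} these read $\vep_1 z\overline{x}-\overline{z}x\in\mathbb{Z}[i]$ and $(\vep_2 z-\overline{z})x\in\mathbb{Z}[i]$. Put $\beta=\vep_1\overline{x}-\vep_2 x$. Eliminating the common term $\overline{z}x$ between the two relations gives $z\beta\in\mathbb{Z}[i]$; conjugating the second relation, multiplying by suitable units and substituting gives the companion relation $\overline{z}\beta-\vep_2 z\beta\in\mathbb{Z}[i]$, whence $\overline{z}\beta\in\mathbb{Z}[i]$ as well. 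Since $z$ and $\overline{z}$ are coprime in the principal ideal domain $\mathbb{Z}[i]$, a B\'ezout combination yields $\beta\in\mathbb{Z}[i]$. But $\beta\in\mathbb{Z}[i]$ is exactly the Lemma \ref{lem3} condition for the point-group reflection $R(1,\vep_1\overline{\vep_2})T_r$ to lie in $OC(x+\G)$, contradicting the hypothesis that $OC(x+\G)$ contains no reflection from $P(\G)$.

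For (1) the forward implication is immediate: a product of two reflections is a rotation, so if $OC(x+\G)$ is a group then $T_1T_2\in OC(x+\G)\cap SO(2)=SOC(x+\G)$. The substance is the converse. Assuming $AA\subseteq H$, closure reduces to the mixed products, i.e.\ to $HA\subseteq A$ (and $AH\subseteq A$ follows by taking inverses); equivalently $A=HT_0$ for a fixed $T_0\in A$. The inclusion $A\subseteq HT_0$ is free from the hypothesis, since $T'=(T'T_0)T_0$ with $T'T_0\in AA\subseteq H$. The reverse inclusion $HT_0\subseteq A$ is the main obstacle: it is \emph{not} a formal consequence of the group axioms (an abstract singleton $A$ over a nontrivial $H$ is consistent with $AA\subseteq H$), so it must be extracted from the arithmetic of $\mathbb{Z}[i]$. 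I would split into cases: if $OC(x+\G)$ contains a point-group reflection, part (2) already gives the group structure; if it does not, I would combine the fact from part (3) that a numerator occurring in $A$ never occurs among the rotations of $H$ with a divisibility analysis of the reduced numerator $h_1h_0=z_1z_0/N(g)$, where $g=\gcd(z_1,\overline{z_0})$, to upgrade the relation $N(g)M\in\mathbb{Z}[i]$---which is all that $R\in H$ and $T_0\in A$ give directly for the Lemma \ref{lem3} defect $M$ of $RT_0$---to $M\in\mathbb{Z}[i]$. This upgrade, namely that $AA\subseteq H$ forces the denominator of $M$ to be trivial, is the crux of the whole theorem and the hardest step.
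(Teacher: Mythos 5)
Your proofs of parts (2) and (3) are correct and complete. (For the record, the paper itself gives no proof of this theorem---it defers to \cite{LZ}---so correctness is the only benchmark available.) Part (2) via Theorem \ref{cor3} and the identity $RTx-x=R(Tx-x)+(Rx-x)$, exploiting $T\G=\G$, is sound; and your elimination argument for part (3), producing $\beta=\vep_1\overline{x}-\vep_2x\in\mathbb{Z}[i]$ from the two Lemma \ref{lem3} conditions and recognizing this as precisely the Lemma \ref{lem3} criterion for the point-group reflection $R(1,\vep_1\overline{\vep_2})T_r$, checks out in every detail.

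The genuine gap is the converse half of part (1), which you openly leave as a plan rather than a proof, and the plan misjudges what the hypotheses give. Writing $H=SOC(x+\G)$ and $A=OC(x+\G)\setminus H$ as you do, the missing inclusion $HA\subseteq A$ is in fact \emph{unconditional}: it needs no hypothesis $AA\subseteq H$, no case split on the existence of a point-group reflection, and no input from part (3). It follows by running the computation of Theorem \ref{SOCgroup} verbatim on a mixed product. Let $R=R(z_1,\vep_1)\in H$ and $T_0=R(z_0,\vep_0)T_r\in A$; put $g=\gcd(z_1,\overline{z_0})$, $z_1=h_1g$, $z_0=h_0\overline{g}$, so that $RT_0=R(h_1h_0,\vep_1\vep_0)T_r$ has Lemma \ref{lem3} defect $M=\vep_1\vep_0h_1h_0\overline{x}-\overline{h_1h_0}\,x$. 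Your claim that $R\in H$ and $T_0\in A$ only yield $N(g)M\in\mathbb{Z}[i]$ is the misstep: they yield $gM\in\mathbb{Z}[i]$ and $\overline{g}M\in\mathbb{Z}[i]$ \emph{separately}. Indeed, working modulo $\mathbb{Z}[i]$, the condition for $T_0$ reads $\overline{h_0}gx\equiv\vep_0h_0\overline{g}\,\overline{x}$, and substituting it into the two terms of $gM$ and of $\overline{g}M$ gives
\begin{align*}
gM &\equiv \vep_0h_0\lt(\vep_1z_1-\overline{z_1}\rt)\overline{x}, &
\overline{g}M &\equiv \overline{h_0}\lt(\vep_1z_1-\overline{z_1}\rt)x .
\end{align*}
Both right-hand sides lie in $\mathbb{Z}[i]$: the second is the $SOC(x+\G)$ condition for $R$, and the first is its complex conjugate multiplied by the unit $-\vep_1$. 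Since $g$ and $\overline{g}$ are coprime (they divide the coprime pair $z_1$, $\overline{z_1}$), we get $M\in\frac{1}{g}\G\cap\frac{1}{\overline{g}}\G=\G$, so $RT_0\in OC(x+\G)$, and then $T_0R=(R^{-1}T_0)^{-1}\in OC(x+\G)$ as well. With the mixed products closed automatically, part (1) is immediate: rotation--rotation products are Theorem \ref{SOCgroup}, and reflection--reflection products are exactly your hypothesis, so $OC(x+\G)$ is a subgroup if and only if $AA\subseteq SOC(x+\G)$. The paper's example $x=\frac{2}{5}+\frac{1}{5}i$ confirms this picture: there the failure of closure occurs precisely at a product of two reflections, never at a mixed product.
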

	
	When computing for $OC(x+\G)$, we see from Theorem \ref{prop5} that it is convenient to determine whether there is a reflection $T\in P(\G)$ that is in $OC(x+\G)$.  If such a 
	reflection $T$ exists, then $OC(x+\G)$ is a group and it is the semi-direct product of $SOC(x+\G)$ and $\langle T\rangle$.  Otherwise, we need to check if $RT_r\in OC(x+\G)$
	only for those reflections $RT_r$ for which $R(z,\vep)\in SOC(\G)$ and $R'=R'(z,\vep')\notin SOC(x+\G)$ for all unit $\vep'$ holds.

\section{Specific examples}
	For the rest of the discussion, we shall assume that $R(z,\vep)\in SOC(\G)$.  The following theorem solves completely the case when $x$ has an irrational component 
	\cite{LZ}.

	\begin{theorem}  Let $x=a+bi\in\mathbb{C}$.  If $a$ or $b$ is irrational then $OC(x+\G)$ is a group of at most two elements.  In particular, if
		\begin{enumerate}
			\item $a$ is irrational and $b$ is rational then		
			$OC(x+\Gamma)=\lt\{\begin{aligned}
			\langle T_r\rangle &\;\text{if }2b\in\mathbb{Z}\\
			\set{1} &\;\text{otherwise}.
			\end{aligned}\rt.\;$

			\item $a$ is rational and $b$ is irrational then
			$OC(x+\Gamma)=\lt\{\begin{aligned}
			\langle T\rangle &\;\text{if }2a\in\mathbb{Z}\\
			\set{1} &\;\text{otherwise},
			\end{aligned}\rt.$\\ where $T$ is the reflection along the imaginary axis.
			
			\item both $a$ and $b$ are irrational, and
			\begin{enumerate}
				\item $a$, $b$ are rationally independent then $OC(x+\Gamma)=\set{1}$.
			
				\item $a=\frac{p_1}{q_1}+\frac{p_2}{q_2}b$ where $p_j$, $q_j\in\mathbb{Z}$, $p_j$ and $q_j$ are
				relatively prime (for $j=1,2$) with
				\begin{enumerate}
					\item $p_2q_2$ even, then $OC(x+\Gamma)=\lt\{\begin{aligned}
					\langle R&T_r\rangle &&\text{if }q_1|2q_2\\
					\{&1\} &&\text{otherwise},
					\end{aligned}\rt.$\\where $R=R(p_2+q_2i,1)\in SOC(\G)$.
	
					\item $p_2q_2$ odd, then $OC(x+\Gamma)=\lt\{\begin{aligned}
					\langle R&T_r\rangle &&\text{if }q_1|q_2\\
					\{&1\} &&\text{otherwise},
					\end{aligned}\rt.$\\where $R=R(\frac{p_2+q_2}{2}-\frac{p_2-q_2}{2}i, i)\in SOC(\G)$.
				\end{enumerate}
			\end{enumerate}			
		\end{enumerate}
	\end{theorem}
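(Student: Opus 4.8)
The plan is to treat rotations and reflections separately, turning membership in $OC(x+\G)$ into arithmetic conditions on $x=a+bi$ via Lemma~\ref{lem3}, and then to assemble $OC(x+\G)$ using Theorem~\ref{prop5}. First I would show that $SOC(x+\G)=\set{1}$ in every case with an irrational coordinate. Writing $\vep z-\overline z=u+vi$ with $u,v\in\mathbb{Z}$, Lemma~\ref{lem3}(1) says $R(z,\vep)\in SOC(x+\G)$ iff $ua-vb\in\mathbb{Z}$ and $ub+va\in\mathbb{Z}$. Whether one of $a,b$ is irrational, or $a,b$ are rationally independent, or $a=\tfrac{p_1}{q_1}+\tfrac{p_2}{q_2}b$, a short argument (killing the coefficient of an irrational and solving the resulting homogeneous system) forces $u=v=0$; since $z,\overline z$ are coprime and $z=1$ for point-group elements, $\vep z=\overline z$ then gives $R=1$. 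This reduces everything to reflections: every reflection is an involution, so with $SOC(x+\G)=\set{1}$ Theorem~\ref{prop5}(1) shows that $OC(x+\G)$ is a group exactly when it contains at most one reflection, in which case it is $\set{1}$ or $\langle T\rangle\cong C_2$ of order at most two.

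Next I would set up the reflection test. For $T=R(z,\vep)T_r$ with $z=c+di$ reduced, Lemma~\ref{lem3}(2) requires $\vep z\overline x-\overline z x\in\mathbb{Z}[i]$, which for each unit $\vep$ collapses to one condition $\alpha a+\beta b\in\mathbb{Z}$ with $\alpha,\beta$ integer-linear in $c,d$ (e.g.\ $\vep=1$ gives $2(cb-da)\in\mathbb{Z}$). In cases (1) and (2) I would check that any non-point-group $z$ forces an irrational quantity to be rational, so only the four point-group reflections survive; among these exactly $T_r$ (resp.\ the reflection along the imaginary axis) passes, under the condition $2b\in\mathbb{Z}$ (resp.\ $2a\in\mathbb{Z}$). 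In case 3(a), the relation $\alpha a+\beta b\in\mathbb{Z}$ together with $\mathbb{Q}$-independence of $\set{1,a,b}$ forces $\alpha=\beta=0$, hence $c=d=0$, so no reflection exists and $OC(x+\G)=\set{1}$.

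The heart of the argument, and the main obstacle, is case 3(b). Substituting $a=\tfrac{p_1}{q_1}+\tfrac{p_2}{q_2}b$ splits each reflection condition into a homogeneous part (the coefficient of the irrational $b$ must vanish) and a rationality part $\tfrac{p_1}{q_1}\alpha\in\mathbb{Z}$. For each $\vep$ the homogeneous part pins down the direction $(c:d)$, and a computation using the identities $(1\mp i)(p_2+q_2i)$ shows that all four choices of $\vep$ describe the \emph{same} reflection axis $\arg(p_2+q_2i)$; hence there is a unique candidate reflection $T$, and $OC(x+\G)$ has at most two elements. The delicate point is that Lemma~\ref{lem3} is valid only for reduced $z$: when $p_2q_2$ is even, $p_2+q_2i$ is already coprime to its conjugate and the representation $R(p_2+q_2i,1)$ applies, yielding the rationality condition $q_1\mid 2q_2$; when $p_2q_2$ is odd, $(1+i)\mid(p_2+q_2i)$, so one must first cancel this factor and use the reduced representative $z=\tfrac{p_2+q_2}{2}-\tfrac{p_2-q_2}{2}i$ with $\vep=i$, which sharpens the condition to $q_1\mid q_2$. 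I expect the bookkeeping around this factor of $1+i$ to be where the two subcases genuinely diverge. In either subcase the unique reflection $RT_r$ lies in $OC(x+\G)$ precisely under the stated divisibility, so $OC(x+\G)=\langle RT_r\rangle$ or $\set{1}$ accordingly, completing the theorem.
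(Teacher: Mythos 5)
Your proposal is correct, and it is essentially the intended argument: the paper itself states this theorem without proof (deferring to \cite{LZ}), and the machinery it does provide---Lemma~\ref{lem3}, Theorem~\ref{prop5}, the convention that $z/\overline{z}$ is reduced, and the remark that $\Re(z)$, $\Im(z)$ are coprime and of opposite parity---is exactly what your argument runs on. I checked the steps you leave as ``short arguments'': $SOC(x+\G)=\set{1}$ holds in every case (in case 3(b) the homogeneous system for $(u,v)$ has determinant $1+p_2^2/q_2^2\neq 0$); each unit $\vep$ does collapse Lemma~\ref{lem3}(2) to a single condition, namely $2(cb-da)$, $2(ca+db)$, $(c+d)a+(d-c)b$, $(d-c)a-(c+d)b\in\mathbb{Z}$ for $\vep=1,-1,i,-i$ respectively; in case 3(b) all four force the same axis $\arg(p_2+q_2i)$, via the associates $-i(p_2+q_2i)$ and $(1\mp i)(p_2+q_2i)$; and your reduced-representative bookkeeping is exactly right, since $\gcd(z,\overline{z})=1$ for $z=p_2+q_2i$ precisely when $p_2+q_2$ is odd, i.e.\ $p_2q_2$ even, while for $p_2q_2$ odd one must cancel $1+i$ to get $z=\frac{p_2+q_2}{2}-\frac{p_2-q_2}{2}i$ with $\vep=i$, which replaces the coefficient $\alpha=\pm 2q_2$ by $\alpha=q_2$ and hence sharpens $q_1\mid 2q_2$ to $q_1\mid q_2$.
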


	\begin{ex}
		\begin{enumerate}
			\item[]		
	
			\item Suppose $x=\frac{1}{\sqrt{2}}+\frac{1}{\sqrt{3}}i$.  We immediately see that we cannot write $\frac{1}{\sqrt{2}}=c+d\frac{1}{\sqrt{3}}$
			where $c,d\in\mathbb{Q}$ since $\sqrt{2}\notin\mathbb{Q}\lt(\sqrt{3}\,\rt)$.  Hence, $OC(x+\G)=\set{1}$.
		
			\item Let $x=\sqrt{2}-\frac{\sqrt{2}}{2}i$.  We have $\sqrt{2}=\frac{0}{1}+\lt(\frac{-2}{1}\rt)\lt(-\frac{\sqrt{2}}{2}\rt)$, and since $1|(2\cdot 1)$, 
			$OC(x+\Gamma)=\langle RT_r\rangle$ where $R=R(-2+i,1)\in SOC(\G)$.			
		\end{enumerate} 
	\end{ex}

	It only remains to consider the case when both $a$ and $b$ are rational.  We now consider $x=a+bi\in\mathbb{Q}(i)$ and write $x=\frac{p}{q}$ where $p$, $q\in\mathbb{Z}[i]$,
	and $p$, $q$ are relatively prime (in $\mathbb{Z}[i]$). The following lemma tells us that $SOC(x+\G)$ depends only on the denominator $q$ of $x$.

	\begin{lemma}\label{prop8} 
		Let $\G=\mathbb{Z}[i]$, $x=\frac{p}{q}\in\mathbb{Q}(i)$ where $p$, $q\in\mathbb{Z}[i]$, with $p$, $q$ relatively prime, and $R(z,\vep)\in SOC(\G)$.  Then
		$R\in SOC(x+\G)$ if and only if $q$ divides $\vep z-\overline{z}$. Furthermore, $SOC(x+\G)=SOC(\frac{1}{q}+\G)$.
	\end{lemma}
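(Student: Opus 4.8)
The plan is to convert the membership criterion of Lemma~\ref{lem3}(1) into a pure divisibility statement in $\mathbb{Z}[i]$ and then to observe that this statement sees $x$ only through its denominator $q$. First I would apply Lemma~\ref{lem3}(1): $R=R(z,\vep)\in SOC(x+\G)$ holds exactly when $(\vep z-\overline{z})\,x\in\mathbb{Z}[i]$. Writing $x=\frac{p}{q}$, this says that $q$ divides the product $(\vep z-\overline{z})\,p$ in $\mathbb{Z}[i]$.

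Next I would exploit that $\mathbb{Z}[i]$ is a principal ideal domain, and hence a UFD, together with the hypothesis $\gcd(p,q)=1$. Since $q$ divides $(\vep z-\overline{z})\,p$ but shares no Gaussian prime factor with $p$, it must divide the remaining factor, giving $q\mid(\vep z-\overline{z})$. The reverse implication is immediate, for if $q\mid(\vep z-\overline{z})$ then $(\vep z-\overline{z})\tfrac{p}{q}\in\mathbb{Z}[i]$. This establishes the first assertion.

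For the second assertion, I would feed the shift $x'=\frac{1}{q}$ into the criterion just proved. Its numerator $1$ is automatically coprime to $q$, so the hypotheses are satisfied and we obtain $R\in SOC(\tfrac{1}{q}+\G)$ if and only if $q\mid(\vep z-\overline{z})$---the very condition characterizing $SOC(x+\G)$. Hence the two groups coincide. The one delicate point, and the step I would take the most care over, is the coprimality reduction: that $\gcd(p,q)=1$ together with $q\mid ap$ forces $q\mid a$. This is precisely where unique factorization of the Gaussian integers is indispensable; it also shows that the criterion is insensitive to replacing $q$ by a unit multiple, so that ``the'' denominator is well defined for our purposes.
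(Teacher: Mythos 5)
Your proposal is correct and follows essentially the same route as the paper's own proof: apply Lemma~\ref{lem3}(1) to translate membership in $SOC(x+\G)$ into $q\mid(\vep z-\overline{z})p$, use coprimality of $p$ and $q$ (via unique factorization in $\mathbb{Z}[i]$) to conclude $q\mid(\vep z-\overline{z})$, and then note that the criterion depends only on $q$, which gives $SOC(x+\G)=SOC(\frac{1}{q}+\G)$. You merely spell out the converse direction and the specialization to $x'=\frac{1}{q}$, which the paper leaves implicit.
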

	\begin{proof}
	 	We know from Lemma \ref{lem3} that if $R\in SOC(x+\G)$ then $(\vep z-\overline{z})x=\frac{(\vep z-\overline{z})p}{q}\in\mathbb{Z}[i]$.  Since $p$ and $q$ are relatively 
		prime, $q|(\vep z-\overline{z})$.  The second statement follows from the first.
	\end{proof}

	\begin{lemma}\label{lemfunddom}
		Suppose $\Gamma=\mathbb{Z}[i]$ and $x\in\mathbb{C}$.  If $x'=Qx$ for some $Q\in P(\G)$ then \[OC(x'+\G)=Q[OC(x+\G)]Q^{-1}.\]	
	\end{lemma}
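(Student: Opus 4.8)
The plan is to reduce everything to the characterization of $OC(y+\G)$ supplied by Theorem \ref{cor3}(1), namely
\[OC(y+\G)=\set{R\in OC(\G):Ry-y\in\G+R\G},\]
and to exploit the single structural fact about the point group that matters here: every $Q\in P(\G)$ is a linear lattice symmetry, so $Q\G=\G$ and hence also $Q^{-1}\G=\G$. Since $P(\G)\subseteq OC(\G)$ and $OC(\G)$ is a group, conjugation by $Q$ is a bijection of $OC(\G)$ onto itself, so the whole statement amounts to checking that conjugation by $Q$ carries the defining membership condition for $x+\G$ to the one for $x'+\G$.

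First I would prove the inclusion $OC(x'+\G)\subseteq Q[OC(x+\G)]Q^{-1}$. Take $R\in OC(x'+\G)$ and set $R'=Q^{-1}RQ$, so that $R'\in OC(\G)$ and $R=QR'Q^{-1}$; it then remains only to show $R'\in OC(x+\G)$, i.e.\ that $R'x-x\in\G+R'\G$. The key computation uses $x'=Qx$, equivalently $x=Q^{-1}x'$:
\[R'x-x=Q^{-1}RQx-x=Q^{-1}Rx'-Q^{-1}x'=Q^{-1}(Rx'-x').\]
Because $R\in OC(x'+\G)$ we have $Rx'-x'\in\G+R\G$, and applying the linear bijection $Q^{-1}$ gives
\[R'x-x\in Q^{-1}(\G+R\G)=Q^{-1}\G+Q^{-1}R\G=\G+R'\G,\]
where the last equality uses $Q^{-1}\G=\G$ together with $Q^{-1}R\G=Q^{-1}R(Q\G)=Q^{-1}RQ\,\G=R'\G$. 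Thus $R'\in OC(x+\G)$, and $R=QR'Q^{-1}\in Q[OC(x+\G)]Q^{-1}$.

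For the reverse inclusion I would run exactly the same argument with the roles of $x$ and $x'$ swapped: since $x=Q^{-1}x'$ and $Q^{-1}\in P(\G)$ as well, the first part yields $OC(x+\G)\subseteq Q^{-1}[OC(x'+\G)]Q$, which is equivalent to $Q[OC(x+\G)]Q^{-1}\subseteq OC(x'+\G)$. Combining the two inclusions gives the asserted equality. There is no serious obstacle here — the whole thing is a direct verification — and the only point requiring care is the bookkeeping in $Q^{-1}R\G=R'\G$, where one must apply $Q\G=\G$ on the correct side. It is worth noting that this works for \emph{every} $Q\in P(\G)$, rotation or reflection alike, since conjugation preserves determinants and therefore never mixes the proper and improper parts in a way that could affect membership in $OC(\G)$.
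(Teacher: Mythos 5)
Your proof is correct and follows essentially the same route as the paper: both reduce the claim to the membership criterion of Theorem \ref{cor3}(1) and use $Q\G=\G$ (together with $P(\G)\subseteq OC(\G)$) to turn the condition $Rx'-x'\in\G+R\G$ into $Q^{-1}RQ\,x-x\in\G+Q^{-1}RQ\,\G$. The only difference is presentational: the paper writes this as one chain of equivalences, while you prove two inclusions, deducing the second by applying the first to $Q^{-1}$.
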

	\begin{proof}
		Since $SOC(\G)$ is a normal subgroup of $OC(\G)$, $R\in SOC(\G)$ if and only if $QRQ^{-1}\in SOC(\G)$.  Thus, if $R\in SOC(\G)$ then it follows from Theorem \ref{cor3} that
		\begin{alignat*}{2}
			R\in OC(x'+\G)&\Leftrightarrow Rx'-x'\in\G+R\G\\
			&\Leftrightarrow Q(Q^{-1}RQx-x)\in Q(\G+Q^{-1}RQ\G)\\
			&\Leftrightarrow Q^{-1}RQx-x\in \G+Q^{-1}RQ\G\\
			&\Leftrightarrow Q^{-1}RQ\in OC(x+\G)\\
			&\Leftrightarrow R\in Q[OC(x+\G)]Q^{-1}.\qedhere
		\end{alignat*}
	\end{proof}	

	Recall that we only need to consider values of $x=a+bi$ in a fundamental domain of $\Gamma$.  A fundamental domain of $\G$ is $\set{a+bi\in\mathbb{C}:-\frac{1}{2}\leq 
	a,b<\frac{1}{2}}$ (see Figure \ref{funddom}).  Observe that every point $x'$ in the chosen fundamental domain can be written as $x'=Qx$ where $Q\in P(\G)$ and
	$x\in\set{a+bi\in\mathbb{C}: 0\leq b\leq a\leq\frac{1}{2}}$ (a fundamental domain of the symmetry group of $\G$ which is a crystallographic group of type $p4m$).  Hence, it follows 
	from Lemma \ref{lemfunddom} that we only need to compute $OC(x+\G)$ for values of $x=a+bi$, where $0\leq b\leq a\leq\frac{1}{2}$ (see Figure \ref{funddom}).

	\begin{figure}[ht]
		\setlength{\unitlength}{1.5in}
		\begin{center}
			\begin{picture}(1.3,1.3)(-0.65,-0.65)
				\linethickness{0.15pt}
				\put(-0.65,0){\vector(1,0){1.3}}
				\put(0,-0.65){\vector(0,1){1.3}}
				\put(0.005,-0.60){-$\frac{1}{2}$}
				\put(-0.6,-0.1){-$\frac{1}{2}$}				
				\put(0.02,0.55){$\frac{1}{2}$}
				\put(0.52,-0.1){$\frac{1}{2}$}
				\put(-0.5,-0.5){\line(1,0){1}}
				\put(-0.5,-0.5){\line(0,1){1}}
				\multiput(-0.5,0.5)(0.105,0){10}{\line(1,0){0.055}}
				\multiput(0.5,0.5)(0,-0.105){10}{\line(0,-1){0.055}}
				\thicklines
				\put(0,0){\line(1,0){0.5}}
				\put(0.5,0){\line(0,1){0.5}}
				\put(0,0){\line(1,1){0.5}}
			\end{picture}
		\end{center}
		\caption{A fundamental domain of $\G$, or unit cell, and a fundamental domain of the symmetry group of $\G$ (black triangle)}\label{funddom}
	\end{figure}
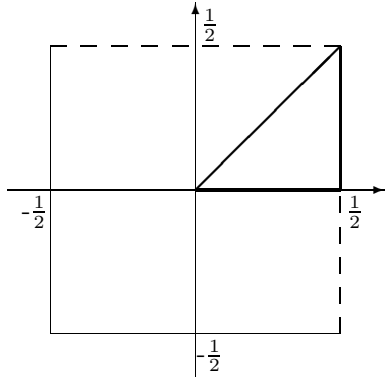	

	\begin{lemma}\label{lemgroup}
	 	If $\G=\mathbb{Z}[i]$ and $x=a+bi\in\mathbb{C}$, then $OC(x+\G)$ is a subgroup of $OC(\G)$ if one of the following conditions is satisfied: $a\in\frac{1}{2}\mathbb{Z}$, 
		$b\in\frac{1}{2}\mathbb{Z}$ or $a\pm b\in\mathbb{Z}$.  Furthermore, $OC(x+\G)=SOC(x+\G)\rtimes \langle RT_r\rangle$ where $R=R(1,\vep)\in SOC(\G)$, and 
		\[\vep=\lt\{\begin{aligned}
					1 & \;\text{if }b\in\tfrac{1}{2}\mathbb{Z}\\
					-1 & \;\text{if }a\in\tfrac{1}{2}\mathbb{Z}\\
					i & \;\text{if }a-b\in\mathbb{Z}\\
					-i & \;\text{if }a+b\in\mathbb{Z}.
				\end{aligned}\rt.\]		
	\end{lemma}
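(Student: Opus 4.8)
The plan is to reduce everything to Theorem \ref{prop5}(2): it suffices to produce, under each hypothesis, a reflection $T\in P(\G)$ that also belongs to $OC(x+\G)$, for then $OC(x+\G)$ is automatically a group and decomposes as $SOC(x+\G)\rtimes\langle T\rangle$. The reflections contained in $P(\G)$ are exactly the four maps $T=R(1,\vep)T_r\colon w\mapsto\vep\overline{w}$ with $\vep$ a unit of $\mathbb{Z}[i]$, namely $\vep\in\set{1,-1,i,-i}$, since $R(1,\vep)$ is multiplication by a symmetry rotation and $T_r$ is complex conjugation; geometrically these are the mirrors along the real axis, the imaginary axis, and the two diagonals.

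First I would feed these reflections into Lemma \ref{lem3}(2). Because $z=1$ (hence $\overline{z}=1$) for any $R\in P(\G)$, the criterion $\vep z\overline{x}-\overline{z}x\in\mathbb{Z}[i]$ simplifies to $\vep\overline{x}-x\in\mathbb{Z}[i]$. Writing $x=a+bi$, a short computation gives
\[
\vep\overline{x}-x=
\begin{cases}
-2bi, & \vep=1,\\
-2a, & \vep=-1,\\
(b-a)(1-i), & \vep=i,\\
-(a+b)(1+i), & \vep=-i.
\end{cases}
\]
Reading off when each value lies in $\mathbb{Z}[i]$ yields precisely the four hypotheses: $b\in\tfrac{1}{2}\mathbb{Z}$, $a\in\tfrac{1}{2}\mathbb{Z}$, $a-b\in\mathbb{Z}$, and $a+b\in\mathbb{Z}$, matched to $\vep=1,-1,i,-i$ respectively. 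Thus whenever one of the four conditions holds, the corresponding reflection $RT_r$ with $R=R(1,\vep)$ lies in $OC(x+\G)$.

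It then only remains to invoke Theorem \ref{prop5}(2) with this reflection to conclude both that $OC(x+\G)$ is a subgroup of $OC(\G)$ and that $OC(x+\G)=SOC(x+\G)\rtimes\langle RT_r\rangle$. I do not expect a genuine obstacle here: the one point requiring care is the bookkeeping of the reduction $\vep\overline{x}-x\in\mathbb{Z}[i]$ and the correct pairing of each unit with its condition. A minor caveat worth stating is that the four hypotheses are not mutually exclusive (for instance $a,b\in\tfrac{1}{2}\mathbb{Z}$ satisfies several at once); when several hold, each furnishes its own reflection and hence its own---equally valid---semidirect-product decomposition. Interpreted geometrically, the relation $\vep\overline{x}-x\in\mathbb{Z}[i]$ simply says that $T$ maps the shifted lattice $x+\G$ onto itself, so the four conditions detect exactly when $x+\G$ inherits one of the mirror symmetries of $\G$.
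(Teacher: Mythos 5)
Your proposal is correct and follows exactly the paper's own route: verify via Lemma \ref{lem3}(2) (with $z=1$) that the reflection $R(1,\vep)T_r\in P(\G)$ lies in $OC(x+\G)$ precisely when $\vep\overline{x}-x\in\mathbb{Z}[i]$, then invoke Theorem \ref{prop5}(2). The only difference is that you write out the case-by-case computation of $\vep\overline{x}-x$ explicitly, which the paper leaves implicit.
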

	\begin{proof}
	 	Consider the reflection $RT_r\in P(\G)$.  By Lemma \ref{lem3}, $RT_r\in OC(x+\G)$ if and only if $\vep\overline{x}-x\in\mathbb{Z}[i]$.  The result follows by applying Theorem 
		\ref{prop5}.
	\end{proof}

	In particular, for values of $a$ and $b$ for which $0\leq b\leq a\leq \frac{1}{2}$, $OC(x+\G)$ is a subgroup of $OC(\G)$ when $a=\frac{1}{2}$, $b=0$, or $a=b$ (the boundaries of the 
	triangle in Figure \ref{funddom}).
	
	Before looking at some examples, we note that given $R(z,\vep)\in SOC(\G)$, we have
	\begin{equation}\label{ezmcz}
	\vep z-\overline{z}=\lt\{\begin{aligned}
		2\,&\Im(z) &&\;\text{if }\vep=1\\
		-2\,&\Re(z) &&\;\text{if }\vep=-1\\
		-[\Re(z)+&\Im(z)](1-i) &&\;\text{if }\vep=i\\
		-i[\Re(z)-&\Im(z)](1-i) &&\;\text{if }\vep=-i\;.
	\end{aligned}\rt.
	\end{equation}
	In addition, we see from \eqref{coincrot} and \eqref{num} that $\Re(z)$ and $\Im(z)$ are relatively prime and of different parity (that is, one is odd and the other is even).

	We will also exhibit the number of possible coincidence rotations and CSLs obtained with given index $m$ of the shifted lattice $x+\G$ by means of generating functions.  We shall 
	denote by $\hat{f}_x(m)$ the number of coincidence rotations of $x+\Gamma$ of index $m$, and $f_x(m)$ the number of CSLs of $x+\Gamma$ of index $m$.
	
	\begin{ex} $x=\frac{1}{2}+\frac{1}{2}i=\frac{1}{1-i}$
		\setlength{\unitlength}{1.25in}
		\begin{center}
			\begin{picture}(0.65,0.80)(0,-0.15)
				\linethickness{0.10pt}
				\put(0,0){\vector(1,0){0.65}}
				\put(0,0){\vector(0,1){0.65}}
				\put(-0.1,0.48){$\frac{1}{2}$}
				\put(0.48,-0.15){$\frac{1}{2}$}
				\multiput(0,0.5)(0.105,0){5}{\line(1,0){0.055}}
				\thicklines
				\put(0,0){\line(1,0){0.5}}
				\put(0.5,0){\line(0,1){0.5}}
				\put(0,0){\line(1,1){0.5}}
 				\put(0.5,0.5){\circle*{0.08}}
				\put(0.6,0.5){\makebox(0,0){$x$}}
			\end{picture}
		\end{center}
		
		The denominator of $x$ is $q=1-i$ and we see from \eqref{ezmcz} that $q|(\vep z-\overline{z})$ for all $z$, $\vep$.  Lemmas \ref{prop8} and \ref{lemgroup} implies that 
		$SOC(x+\G)=SOC(\G)\cong	C_4\times{\mathbb{Z}}^{(\aleph_0)}$ and $OC(x+\G)=OC(\G)\cong SOC(x+\G)\rtimes C_2$.  Clearly, $OC(x+\G)$ is a subgroup of $OC(\G)$ in this case.  
		Also, $\hat{f}_x(m)=\hat{f}(m)$ and $f_x(m)=f(m)$.  

		These results agree with the results obtained in the Appendix of \cite{PBR}
		(just shift the center of the Delaunay cell into the origin).
		
		We also note here that $(S)OC(x+\G)=(S)OC(\G)$ if and only if $x=\frac{m}{2}+\frac{n}{2}i$, where $m$, $n$ are odd integers.  
	\end{ex}
	
	\begin{ex} $x=\frac{1}{2}$
		\setlength{\unitlength}{1.25in}
		\begin{center}
			\begin{picture}(0.65,0.80)(0,-0.15)
				\linethickness{0.10pt}
				\put(0,0){\vector(1,0){0.65}}
				\put(0,0){\vector(0,1){0.65}}
				\put(-0.1,0.48){$\frac{1}{2}$}
				\put(0.46,-0.18){$\frac{1}{2}$}
				\multiput(0,0.5)(0.105,0){5}{\line(1,0){0.055}}
				\thicklines
				\put(0,0){\line(1,0){0.5}}
				\put(0.5,0){\line(0,1){0.5}}
				\put(0,0){\line(1,1){0.5}}
 				\put(0.5,0){\circle*{0.08}}
				\put(0.57,0.07){\makebox(0,0){$x$}}
			\end{picture}
		\end{center}
		
		The denominator of $x$ is $q=2$.  Since the sum of $\Re(z)$ and $\Im(z)$ is odd, we obtain from \eqref{ezmcz} that for all $z$, $q|(\vep z-\overline{z})$ if and only if 
		$\vep=\pm 1$.  Hence, by Lemma \ref{prop8}, \[SOC(x+\G)=\set{R(z,\vep)\in SOC(\G):\vep=\pm 1}\cong C_2\times{\mathbb{Z}}^{(\aleph_0)}.\]  From Lemma \ref{lemgroup}, 
		$OC(x+\G)=SOC(x+\G)\rtimes\langle T_r\rangle$ and is a subgroup of $OC(\G)$ of index 2.  In this case, we have $f_x(m)=f(m)$ but $\hat{f}_x(m)=2f_x(m)$.
	\end{ex}

	\begin{ex} $x_0=\frac{1}{3}$ and $x_1=\frac{1}{3}+\frac{1}{3}i$
		\setlength{\unitlength}{1.25in}
		\begin{center}
			\begin{picture}(0.65,0.80)(0,-0.15)
				\linethickness{0.10pt}
				\put(0,0){\vector(1,0){0.65}}
				\put(0,0){\vector(0,1){0.65}}
				\put(-0.1,0.48){$\frac{1}{2}$}
				\put(0.46,-0.18){$\frac{1}{2}$}
				\multiput(0,0.5)(0.105,0){5}{\line(1,0){0.055}}
				\thicklines
				\put(0,0){\line(1,0){0.5}}
				\put(0.5,0){\line(0,1){0.5}}
				\put(0,0){\line(1,1){0.5}}
 				\put(0.33,0){\circle*{0.08}}
				\put(0.33,-0.1){\makebox(0,0){$x_0$}}
				\put(0.33,0.33){\circle*{0.08}}
				\put(0.23,0.33){\makebox(0,0){$x_1$}}					
			\end{picture}
		\end{center}

		Both $x_0$ and $x_1$ have denominator $q=3$.  It is easy to see that if both rational integers $m$, $n$ are not divisible by $q$, then either $m+n$ or $m-n$ is divisible by 
		$q$.  Hence, by \eqref{ezmcz}, there is a unique $\vep$ so that $q|(\vep z-\overline{z})$ for all $z$. We conclude from Lemma \ref{prop8} that 
		$SOC(x_j+\G)\cong{\mathbb{Z}}^{(\aleph_0)}$ for $j=0,1$.  In addition, by Lemma \ref{lemgroup}, $OC(x_j+\G)=SOC(x_j+\G)\rtimes\langle RT_r\rangle$, where $R=R(1,i^j)\in 
		P(\G)$, and $OC(x_j+\G)$ is a subgroup of $OC(\G)$ of index 4.  Finally, we have $\hat{f}_{x_j}(m)=f_{x_j}(m)=f(m)$.
	\end{ex}

	\begin{ex} $x_0=\frac{1}{5}$, $\frac{2}{5}$ and $x_1=\frac{1}{5}+\frac{1}{5}i$, $\frac{2}{5}+\frac{2}{5}i$
		\setlength{\unitlength}{1.25in}
		\begin{center}
			\begin{picture}(0.65,0.8)(0,-0.15)
				\linethickness{0.10pt}
				\put(0,0){\vector(1,0){0.65}}
				\put(0,0){\vector(0,1){0.65}}
				\put(-0.1,0.48){$\frac{1}{2}$}
				\put(0.46,-0.15){$\frac{1}{2}$}
				\multiput(0,0.5)(0.105,0){5}{\line(1,0){0.055}}
				\thicklines
				\put(0,0){\line(1,0){0.5}}
				\put(0.5,0){\line(0,1){0.5}}
				\put(0,0){\line(1,1){0.5}}
				\put(0.2,0){\circle*{0.08}}
				\put(0.4,0){\circle*{0.08}}
				\put(0.2,0.2){\circle{0.08}}
				\put(0.4,0.4){\circle{0.08}}
				\put(0.75,0.4){\circle*{0.08}}
				\put(0.75,0.2){\circle{0.08}}
				\put(0.88,0.4){\makebox(0,0){$x_0$}}					
				\put(0.88,0.2){\makebox(0,0){$x_1$}}					
			\end{picture}
		\end{center}

		We have the denominator $q=5$ for $x_j$, $j=0$, $1$.  By considering each possible combination of $\Re(z)$ and $\Im(z)$ modulo $q$, it can be verified that for all $z$ with 
		$5\nmid N(z)$, there is a unique $\vep$ such that $R(z,\vep)\in SOC(\frac{1}{5}+\G)$.  This means that $SOC(x_j+\G)\cong{\mathbb{Z}}^{(\aleph_0)}$ for $j=0,1$ by Lemma 
		\ref{prop8}.  Also, it follows from Lemma \ref{lemgroup} that $OC(x_j+\G)$ is a subgroup of $OC(\G)$ and $OC(x_j+\G)=SOC(x_j+\G)\rtimes\langle RT_r\rangle$, where 
		$R=R(1,i^j)\in P(\G)$.  Furthermore, $\hat{f}_{x_j}(m)=f_{x_j}(m)$ where the Dirichlet series generating function for $f_{x_j}(m)$ is given by
		\begin{alignat*}{2}
			\Phi_{x_j}(s)&=\sum_{m=1}^{\infty}{\frac{f_{x_j}(m)}{m^s}}=\prod_{\stackrel{p\equiv 1(4)}{p\neq 5}}\frac{1+p^{-s}}{1-p^{-s}}\\
							&=1+\tfrac{2}{13^s}+\tfrac{2}{17^s}+\tfrac{2}{29^s}+\tfrac{2}{37^s}+\tfrac{2}{41^s}+\tfrac{2}{53^s}+\tfrac{2}{61^s}+\tfrac{2}{73^s}
							+\tfrac{2}{89^s}+\tfrac{2}{97^s}+\tfrac{2}{101^s}+\tfrac{2}{109^s}+\tfrac{2}{113^s}+\\
							&\quad\tfrac{2}{137^s}+ \tfrac{2}{149^s}+\tfrac{2}{157^s}+\tfrac{2}{169^s}+\tfrac{2}{173^s}+\tfrac{2}{181^s}+
							\tfrac{2}{193^s}+\tfrac{2}{197^s}+\tfrac{4}{221^s}+\tfrac{2}{229^s}+\ldots.\,.
		\end{alignat*}
	\end{ex}

	\begin{ex} $x=\frac{2}{5}+\frac{1}{5}i=\frac{i}{1+2i}$
		\setlength{\unitlength}{1.25in}
		\begin{center}
			\begin{picture}(0.65,0.80)(0,-0.15)
				\linethickness{0.10pt}
				\put(0,0){\vector(1,0){0.65}}
				\put(0,0){\vector(0,1){0.65}}
				\put(-0.1,0.48){$\frac{1}{2}$}
				\put(0.46,-0.18){$\frac{1}{2}$}
				\multiput(0,0.5)(0.105,0){5}{\line(1,0){0.055}}
				\thicklines
				\put(0,0){\line(1,0){0.5}}
				\put(0.5,0){\line(0,1){0.5}}
				\put(0,0){\line(1,1){0.5}}
 				\put(0.4,0.2){\circle*{0.08}}
				\put(0.3,0.2){\makebox(0,0){$x$}}				
			\end{picture}
		\end{center}	

		Since the denominator $q=1+2i$ of $x$ does not divide $1-i$, we see from \eqref{ezmcz} that $q|(\vep z-\overline{z})$ if and only if $5|(\vep z-\overline{z})$.  Hence, 
		$SOC(x+\G)=SOC(\frac{1}{5}+\G)\cong{\mathbb{Z}}^{(\aleph_0)}$ by Lemma \ref{prop8}. Applying Theorem \ref{prop5}, if $RT_r\in OC(x+\G)$ where $R(z,\vep)\in SOC(\G)$ then 
		$5|N(z)$ because $OC(x+\G)$ does not contain a reflection in $P(\G)$.  Observe that given $z$ with $5|N(z)$, we must find either $1+2i$ or $1-2i$ (and not both) in the 
		factorization of $z$ into primes in $\mathbb{Z}[i]$.  If $(1-2i)|z$ then $z\overline{x}\in\mathbb{Z}[i]$ and $\vep z\overline{x}-\overline{z}x=\vep 
		z\overline{x}-\overline{z\overline{x}}\in\mathbb{Z}[i]$, $\forall\vep$.  Lemma \ref{lem3} then implies that
		\[OC(x+\G)=SOC(x+\G)\cup\set{RT_r: R(z,\vep)\in SOC(\G)\text{ with }(1-2i)|z}.\]
		Here, we have an example of a set $OC(x+\G)$ that is not a group.  Indeed, let $T_k=R_kT_r\in OC(x+\G)\setminus SOC(x+\G)$ where $R_k=R_k(z,\vep_k)\in SOC(\G)$, for $k=1,2$, 
		with $\vep_1\neq\vep_2$.  We obtain that $T_1T_2$ is not the identity with $T_1T_2=R_1{R_2}^{-1}\in P(\G)$ which means that $T_1T_2\notin OC(x+\G)$.  By Theorem \ref{prop5}, 
		$OC(x+\G)$ is not a subgroup of $OC(\G)$.

		The Dirichlet series generating function for $f_{x}(m)$ is given by
		\begin{alignat*}{2}
			\Phi_{x}(s)&=\sum_{m=1}^{\infty}{\frac{f_{x}(m)}{m^s}}=\frac{1}{1-5^{-s}}\cdot\prod_{\stackrel{p\equiv 1(4)}{p\neq 5}}\frac{1+p^{-s}}{1-p^{-s}}\\
			&=1+\tfrac{1}{5^s}+\tfrac{2}{13^s}+\tfrac{2}{17^s}+\tfrac{1}{25^s}+\tfrac{2}{29^s}+\tfrac{2}{37^s}+\tfrac{2}{41^s}+\tfrac{2}{53^s}+\tfrac{2}{61^s}+\tfrac{2}{65^s}+
			\tfrac{2}{73^s}+\ldots\,.					
		\end{alignat*}
		Also, if we denote by $\hat{F}_x(m)$ the number of (linear) coincidence isometries of $x+\Gamma$ of index $m$, we obtain that
		\[\hat{F}_{x}(m)=\lt\{\begin{aligned}
					f_{x}(m) &\;\text{if } 5 \nmid\, m\\
					4\,f_{x}(m) &\;\text{if } 5\,|\, m.\\
					\end{aligned}\rt.\]
		Hence, the Dirichlet series generating function for $\hat{F}_{x}(m)$ is given by
		\begin{alignat*}{2}
			\Psi_{x}(s)&=\sum_{m=1}^{\infty}{\frac{\hat{F}_{x}(m)}{m^s}}=\frac{1+3\cdot 5^{-s}}{1-5^{-s}}\cdot\prod_{\stackrel{p\equiv 1(4)}{p\neq 5}}\frac{1+p^{-s}}{1-p^{-s}}\\
			&=1+\tfrac{4}{5^s}+\tfrac{2}{13^s}+\tfrac{2}{17^s}+\tfrac{4}{25^s}+\tfrac{2}{29^s}+\tfrac{2}{37^s}+\tfrac{2}{41^s}+\tfrac{2}{53^s}+\tfrac{2}{61^s}+\tfrac{8}{65^s}+
			\tfrac{2}{73^s}+\ldots\,.					
		\end{alignat*}		
	\end{ex}	

\section{Conclusion and outlook}
We have seen that the coincidence isometries of a shifted lattice are also coincidence isometries of the original lattice.  Moreover, the CSLs of the shifted lattice are merely translations of CSLs of the original lattice.  Thus, no new values of coincidence indices $\Sigma$ are obtained by shifting the lattice, and some $\Sigma$-values even disappear or their multiplicity is reduced.

The coincidences of a shifted square lattice were examined in this paper by identifying the lattice with the ring of Gaussian integers.  The problem was completely solved for the case when
the shift consists of an irrational component.  For the remaining case, that is, when the shift may be written as a quotient of two Gaussian integers that are relatively prime, one needs
to compute the set of coincidence rotations for each possible denominator  via some divisibility condition.  Partial results are given here and in \cite{LZ} on how to obtain the coincidence isometries and indices for any given denominator and corresponding numerator.  General results in this direction will depend on the arithmetic of the Gaussian integers.

It should be emphasized that the order of rotation and translation of a lattice is in general not interchangeable.  In this paper, we compare the shifted lattice with its rotated copy, that is, the translation (say $x$) comes first before rotation.  This corresponds to the situation where the lattice is first rotated by $R$ and is shifted afterwards by the vector $Rx-x$, which is equivalent to a rotation of the lattice about a different point ($-x$), thus keeping at least one point ($-x$) fixed.    In particular, the CSLs of a shifted lattice are shifted copies of the intersection of a lattice with a rotated, then translated version of the same lattice (see \cite{LZ}).

The next step is to extend these results to planar modules by identifying the modules with rings of cyclotomic integers (\cite{PBR}).  General results for lattices will of course also hold in three dimensions, but the approach in this case will not be via complex numbers but via quaternions.  

Finally, it is expected that the ideas behind the study of a shifted lattice may be applied to crystals where there is more than one atom per primitive unit cell, as described in \cite{GP,PV}.  A related mathematical problem is the following: Suppose the coincidence problem for a sublattice (of finite index) of a given lattice has already been solved. What can be deduced about the coincidence indices of the the original lattice?  A possible approach to answer this question involves looking at the coincidences of the corresponding cosets, which are just shifted copies of the sublattice.

\subsection*{Acknowledgements}
The authors are grateful to the referee for his valuable remarks on the manuscript. M. Loquias would like to thank the Deutscher Akademischer Austausch Dienst (DAAD) for financial support during his stay in Germany. This work was supported by the German Research Council (DFG), within the CRC~701.

\end{document}